\newtheorem{theorem}{Theorem}[section]
\newtheorem{lemma}[theorem]{Lemma}
\newtheorem{proposition}[theorem]{Proposition} 
\newtheorem{corollary}[theorem]{Corollary}
\newtheorem{remark}[theorem]{Remark}
\def\RE{{\mathbb R}} 
\def\R{{\mathbb R}}
\def\Rm{{\mathbb R^m}}
\def\N{{\mathbb N}} 
\def\ud{\mathrm{d}}
\def\ind{\mathds{1}}
\begin{document}
 
\title{A counterexample to $L^{\infty}$-gradient type estimates for Ornstein-Uhlenbeck operators}     
 
\author{Emanuele Dolera
\\    
Enrico Priola 
\\ \\
   Dipartimento di  
 Matematica,  
 \\
Universit\`a  di Pavia, Pavia, Italy  
\\
emanuele.dolera@unipv.it\ \ enrico.priola@unipv.it
}

\date{}

 \maketitle 
 
\abstract { Let $(\lambda_k)$ be a strictly  increasing sequence of positive numbers such that $\sum_{k=1}^{\infty} \frac{1}{\lambda_k} < \infty.$  Let $f $ be a bounded smooth function and  denote by $u= u^f$   the bounded classical solution to 
$u(x) - \frac{1}{2}\sum_{k=1}^m D^2_{kk} u(x) +  \sum_{k =1}^m \lambda_k x_k D_k u(x) = f(x), $ $ x \in \R^m$.
It is known that the following dimension-free estimate holds: 
$$
\displaystyle \int_{\R^m} \Big (\sum_{k=1}^m \lambda_k \, (D_k u (y))^2   \Big)^{p/2} \mu_m (dy)
 \le (c_p)^p  \, \int_{\R^m}  |f( y)|^p \mu_m (dy),\;\;\; 1 < p < \infty;
$$ 
here $\mu_m$ is the ``diagonal'' Gaussian measure determined by $\lambda_1, \ldots, \lambda_m$ and $c_p > 0$ is independent of $f$ and $m$. This is a consequence of generalized Meyer's inequalities \cite{goldysChojnowskaJFA}.  We show that, if $\lambda_k \sim k^2$, then   
such estimate does not hold when $p= \infty$. Indeed we prove  
$$
\sup_{\substack{f \in C^{ 2}_b(\R^m),\;\;  \|f\|_{\infty} \leq 1}} \Big \{ \sum_{k=1}^m \lambda_k \, (D_k u^f (0))^2 \Big \} \to \infty \;\; \text {as} \; m \to \infty.
$$
This is in contrast to the case of $\lambda_k = \lambda >0$, $k \ge 1$, where a dimension-free bound holds for $p =\infty$.}  
  
 \vspace{2.9 mm}

\noindent {\bf Keywords:} Ornstein-Uhlenbeck operators, gradient estimates, generalised Meyer's inequalities

\vspace{2.7 mm}

 \noindent {\bf Mathematics  Subject Classification (2010):}  
  47D07 
 (60H15,
 42B37, 
 35R15) 

\section{Introduction and main result}

Let us recall dimension-free $L^p$-gradient estimates involving  Ornstein-Uhlenbeck operators (cf. \cite{meyer84, Shigekawa92, dapratoLincei, goldysChojnowskaJFA, ChGold02}). 
Let $(\lambda_k)$ be a   strictly increasing sequence of positive numbers such that 
\begin{equation}\label{trace} 
\sum_{k=1}^{\infty} \frac{1}{\lambda_k} < \infty.
\end{equation} 
For any $m \ge 1$ we denote by $A_m$ the  $m \times m$ diagonal matrix with negative eigenvalues $-\lambda_k$, $k=1, \ldots, m$.

Let $f : \R^m \to \R$ be a bounded   $C^2$-function with all first and second bounded derivatives, i.e., $f \in C^{ 2}_b(\R^m)$,  and denote by $u\in C^{ 2}_b(\R^m)$ the unique bounded classical solution to 
\begin{equation}\label{s2} 
u(x) - \Big ( \frac{1}{2} \triangle_m u(x)  + \langle A_mx , Du(x) \rangle  \Big) = u(x) - \frac{1}{2}\sum_{k=1}^m D^2_{kk} u(x) +  \sum_{k =1}^m \lambda_k x_k D_k u(x) = f(x), 
\end{equation}
where $x= (x_1, \ldots, x_m) \in \R^m$ and $\langle \cdot, \cdot\rangle$ denotes the standard scalar product in $\R^m$. See, for instance, \cite{DL}. Here, $D^2_{kk} $ and $D_k$ are first and second partial derivatives with respect to the canonical basis $(e_k)$ in $\R^m$. 
The operator we consider is an  $m$-dimensional Ornstein-Uhlenbeck operator, namely $L_m =  \frac{1}{2} \triangle_m  + \langle A_m x, D\rangle$.

Then, introduce the Gaussian measure $\mu_m= N(0, (-2 A_m)^{-1})$  with mean 0 and covariance matrix  $(-2A_m)^{-1}$, with density as in \eqref{gau1}.
Note that  $L_m$ is a self-adjoint operator on $L^2(\R^m, \mu_m)$ which is the usual $L^2$-space with respect to  $\mu_m$. See, for instance, \cite{gutierrez, dapratoLincei, goldysChojnowskaJFA, DZ1}.   
It is known that if $1< p< \infty$ there exists a  constant $c_p$ (independent of 
$f$ and the dimension $m$) such that  the following sharp gradient estimate holds: 
\begin{equation}\label{grad1}
\int_{\R^m} \Big (\sum_{k=1}^m \lambda_k \, (D_k u (y))^2   \Big)^{p/2} \mu_m (dy) \le (c_p)^p  \, \int_{\R^m}  |f( y)|^p \mu_m (dy).
\end{equation} 
The result follows by the general estimates \eqref{s223} given in  Theorem 5.3 of \cite{goldysChojnowskaJFA} which extends Proposition 3.5 in \cite{Shigekawa92} (see also the references therein). 
Note that  \eqref{grad1}  can be rewritten as  
\begin{equation}\label{s22}
 \| (-A_m)^{1/2} Du \|_{L^p(\R^m, \mu_m)}  \le c_p \|  f \|_{L^p(\R^m, \mu_m)},
\end{equation}
where $(-A_m)^{1/2} Du (x) = \sum_{k=1}^m \sqrt{\lambda_k} \, D_k u(x) e_k$.

Our  main result (cf. Theorem  \ref{main} below) shows that, when $p=\infty$, the dimension-free estimate \eqref{s22} in general fails to hold. Indeed, we prove the following  stronger assertion.  Writing $u =u^{f}$ to stress the dependence of the solution $u$ on $f$, we show  that if  $\lambda_k \sim k^2$ as $k \to \infty$, then, choosing $x=0$,  we have 
\begin{equation}\label{mm}
  \sup_{\substack{f \in C^{ 2}_b(\R^m) \\ \|f\|_{\infty} \leq 1}} |(-A_m)^{1/2} Du^f (0)|_{\R^m}^2 =
  \sup_{\substack{f \in C^{ 2}_b(\R^m) \\ \|f\|_{\infty} \leq 1}}  
   \Big \{ \sum_{k=1}^m \lambda_k \, (D_k u^f (0))^2 \Big \} \to \infty \;\; \text {as} \; m \to \infty.
\end{equation}
We point out that in contrast to \eqref{mm} when $A_m = - \lambda I_m$ with $\lambda >0$ and $I_m$ the $m \times m$ identity matrix then the dimension-free $L^{\infty}$-gradient estimates    
\begin{equation}\label{uno}
\|(\lambda)^{1/2} D u^f  \|_{\infty} = \sup_{x \in \R^m}\, |(\lambda)^{1/2} D u^f  (x)|_{\R^m} \, \le   \frac{\pi} {\sqrt 2}\,  \sup_{x \in \R^m}| f(x)|,\;\; f \in  C^{ 2}_b (\R^m)
\end{equation}
holds true; see Proposition \ref{wrr}.   
 
\vskip 1mm
 
Let us comment on the previous  dimension-free $L^p_{\mu}$-estimate \eqref{s22}. This  can be deduced by  known results for infinite dimensional Ornstein-Uhlenbeck operators. 
To introduce this setting, we replace 
$\R^m$  by a real separable Hilbert space $H$ with orthonormal basis $(e_k)_{k \ge 1} $ and inner product $\langle \cdot , \cdot \rangle$. Then, we consider the unbounded self-adjoint operator $A : D(A)\subset H\to H$ 
such that   
\begin{equation} \label{qsd}
 D(A) = \Big\{ x \in H \, :\, \sum_{k \ge 1} (\langle x,e_k \rangle)^2 \, \lambda_k^2 < \infty  \Big \},\;\;\;\; 
A e_k = - \lambda_k e_k, \;\; k \ge 1
\end{equation}
(cf. \cite{DZ, D2, bass, priolaAOP}). Our condition \eqref{trace} is equivalent to require that  the inverse operator $A^{-1} : H \to H$  is a trace class operator.
The operator $A$  generates a strongly continuous semigroup  $(e^{tA})$  on $H$, given by
$e^{tA} e_k = e^{-t\lambda_k} e_k$, $t \ge 0,$ $k \ge 1$. 
We can  define the corresponding  Ornstein-Uhlenbeck semigroup $(P_t)$:
\begin{equation} \label{ou}
P_t f (x)  = \int_{H} f(e^{tA} x+ \sqrt{I - e^{2tA}}\, y  )  \;     N  \big(0 , - (2 A)^{-1} \big ) \, (dy),\;\; f \in { B}_b (H),\; x \in H, \; t \ge 0
\end{equation}
where $f: H \to \R$ is a Borel, bounded function, and $N  \big(0 , - (2 A)^{-1} \big )$ stands for the centered Gaussian measure defined on the Borel $\sigma$-algebra of $H$ 
(see Chapter 1 in \cite{DZ1}, \cite{D2} and Section 2.2); $I$ is the identity.  
 
Formula \eqref{ou} is an extension of a well-known formula used in finite dimension.  
From the probabilistic point of view  $(P_t)$ is the transition Markov semigroup  of the OU stochastic process $(X_t^x)$  which solves $dX_t = AX_t dt +    dW_t,$ $ X_0 =x$ where $W$   is a {cylindrical Wiener} process on $H$; cf. \cite{DZ, DZ1, hairer}. When  $f\in C^2_b(H)$, i.e., $f$ is bounded, twice Fr\'echet-differentiable with first and second bounded and continuous derivatives, we consider   $u: H \to \R$,
\begin{equation} \label{uu}
 u(x) =  R(1, L) f(x) = \int_0^{\infty} e^{-t} (P_t f)(x)dt,\;\; x \in H.
\end{equation}
Following Chapter 6 in \cite{DZ1}, $u$ is the generalized bounded solution to $u - L u =f$, where $L$ is formally given by $  \frac{1}{2} \text{Tr} (D^2) + \langle x , AD\rangle $. Here, we only note that if $f$ is also  cylindrical, i.e., there exists $m \ge 1$  and $\tilde f \in C^2_b (\R^m)$ such that 
\begin{equation} \label{cil2}
 f(x) = \tilde f (\langle  x, e_{1}\rangle, \ldots, \langle  x, e_{m} \rangle),\;\;\; x \in H, 
\end{equation}   
then $u$ given in \eqref{uu}  depends only on  a finite number of variables, i.e., $u(x) = \tilde u (\langle  x, e_{1}\rangle, \ldots, \langle  x, e_{m} \rangle)$, $x \in H$ (cf. Section 2.2). Moreover,  
$\tilde u$ solves \eqref{s2} with $f$ replaced by $\tilde f$.
In addition, if  $f \in C_b^2(H)$, we have that $u = R(1, L) f \in C^2_b(H)$, and $Du(x) \in D((-A)^{1/2})$, $x \in H$ (cf. \cite{D2} for stronger results).   
 
By  Theorem 5.3 of \cite{goldysChojnowskaJFA} (see also Corollary 5.4 in \cite{goldysChojnowskaJFA} and Remark \ref{che}), there exists a  constant $c_p$ (independent of $f$) such that 
\begin{equation}\label{s223}
 \| (-A)^{1/2} Du \|_{L^p(H, \mu)}  \le c_p \|  f \|_{L^p(H, \mu)}, \;\;\; 1< p< \infty,
\end{equation} 
where   $\mu = N  \big(0 , - (2 A)^{-1} \big )$. Moreover, we have $ \| D^2 u \|_{L^p(H, \mu)}  \le c_p \|  f \|_{L^p(H, \mu)}$, i.e.,
\begin{equation}\label{22}
\int_{H} \Big (\sum_{k=1}^{\infty}  \, (D_{kk} u (y))^2   \Big)^{p/2} \mu (dy) \le (c_p)^p  \, \int_{H}  |f( y)|^p \mu (dy).
\end{equation} 
 It is  not difficult  to show  that \eqref{s223} implies \eqref{s22} using cylindrical functions $f$ as in  \eqref{cil2}; see  Section 2.2.

Estimates  \eqref{s223} and \eqref{22}  are part of the generalized  Meyer's inequalities proved in \cite{goldysChojnowskaJFA}  using 
the elliptic Littlewood-Paley-Stein inequalities associated with the OU semigroup $(P_t)$.  For applications of the 
classical    Meyer's inequalities  to the Malliavin Calculus we refer to \cite{meyer84, meyer, Nu} (see also  Remark \ref{mall}). 
The results given in \cite{goldysChojnowskaJFA} give a   characterization  of the domain of  the generator of $(P_t)$   in $L^p(H, \mu)$; see also \cite{ChGold02} (the case $p=2$  was  obtained  earlier in \cite{dapratoLincei}). We also mention the characterization   of the domain of non self-adjoint Ornstein-Uhlenbeck  generators given in \cite{lunardi, metafAltri, maasNeerven09}.
Estimates \eqref{22} have been  used to prove strong uniqueness for a class of SPDEs in \cite{DFPR}. For related results on Ornstein-Uhlenbeck operators in Gaussian harmonic analysis we refer to \cite{gutierrez, casar} and the references therein.

Our main result implies that \eqref{s223} fails to hold for $p= \infty$, i.e., it is not true that  
there exists $C >0$, independent of $f$, such that 
\begin{equation}\label{magari}         
\sup_{x \in H}\, | (-A)^{1/2} D R(1, L) f  (x)|_H \, \le C\,  \sup_{x \in H}| f(x)|,\;\; f \in C_b^2(H),
 \end{equation}
where  we have used  $u=  R(1, L) f$ as in \eqref{uu}. This estimate is stated in \cite[Theorem 7]{priolaAOP}
which is based on \cite[Lemma 6]{priolaAOP}. However, there is a mistake in the proof of such lemma. In particular  we show that \cite[Theorem 7]{priolaAOP} cannot hold.

\begin{remark}\label{che} {\em  Let us recall the notation used in \cite{goldysChojnowskaJFA}
to study general symmetric Ornstein-Uhlenbeck semigroups in Hilbert spaces.  
For the sake of notational clarity, the operator $C$ used in \cite{goldysChojnowskaJFA} corresponds to our $-(2 A)^{-1}$, while our semigroup $(e^{tA})$ corresponds to $(e^{-tA})$ in \cite{goldysChojnowskaJFA}. 
They use the Malliavin gradient $D_I = C^{1/2} D$ (where $D $ is the Fr\'echet derivative) and $D_A = \frac{1}{\sqrt{2}} D$. Moreover, the symbol
$D_{A^2} = A D_I$, which is used in the definition of the Sobolev space $W^{1,p}_{A^2}$  (see Corollary 5.4 in \cite{goldysChojnowskaJFA}) corresponds to our operator $\frac{{1}}{\sqrt{2}} (-A)^{1/2} D$.} 
\end{remark}

\begin{remark} \label{mall} {\em Let  us recall    the  classical Ornstein-Uhlenbeck semigroup $(S_t)$
\begin{equation}\label{ouu}
 S_t f(x)= \int_{H} f(e^{-t} x+ \sqrt{1 - e^{-2t}}y) \;      \nu \, (dy),\;\; f \in { B}_b (H),\; x \in H, 
\end{equation}
where $\nu$ is a centered  Gaussian measure on $H$ (see Section 2.2). The  classical  Meyer's inequalities give 
a complete characterization of the domains of $(I-N_p)^{m/2}$ in $L_p(H,\nu)$ for all  $p \in (1, \infty)$ and $m=1,2, \ldots$  in terms of Gaussian Sobolev spaces related to $\nu$. Here $N_p$ denotes the generator of $(S_t)$  in $L_p(H,\nu)$ (see \cite{meyer}, \cite{meyer84} and \cite{Nu}). }
\end{remark}

\begin{remark} {\em Estimates like \eqref{s223} and \eqref{22} holds also in H\"older spaces (see \cite{D2, priolarxiv} for more details). In particular, for any $\theta \in (0,1)$, there exists an absolute constant only depending on $\theta$ such that 
\begin{equation}\label{33}
\| (-A)^{1/2} D R(1, L) f \|_{C^{\theta}_b(H, H)}  \le c_{\theta} \|  f \|_{ C^{\theta}_b(H)}.
\end{equation}
}
\end{remark}

\section{Notations and preliminary results} 
 
Let $Q$ be a  symmetric and positive definite $m \times m$ matrix; we denote by 
$N(0,Q)$ the Gaussian measure with mean 0 and covariance matrix $Q$;  it has density 
\begin{equation} \label{gau1}
(2 \pi)^{-m/2} (\det{Q})^{-1/2}      e^{- \frac{| (Q)^{-1/2} x|^2}{2}} 
\end{equation}
with respect to the $m$-dimensional Lebesgue measure. We first consider for $\lambda >0$ the equation
\begin{equation}\label{se}
v(x) - \Big ( \frac{1}{2} \triangle_m v(x)  -  \lambda \langle x , Dv(x) \rangle  \Big) = v(x) - M_m v(x) = f(x), \;\;\; x\in \R^m,
\end{equation}
with  $M_m = \frac{1}{2} \triangle_m - \lambda  \langle x, D \rangle$. We assume that $f \in C^2_b(\R^m)$. Equation \eqref{se}  is similar to \eqref{s2} with $A_m$ replaced by $-\lambda I_m$. Using the following Ornstein-Uhlenbeck semigroup $(S_t^m)$:
\begin{equation}\label{qss} 
 S_t^m f(x)=
\int_{\R^m} f(e^{-\lambda t} x+ \sqrt{1 - e^{-2 \lambda t}} \, y  )  \;      N\big(0, \frac{1}{2 \lambda}I_m \big) \, (dy),\;\;\; x \in \R^m, \; t \ge 0,
\end{equation} 
we find  (cf. \eqref{uu}, and \cite{DL,DZ1}) 
$$
v(x) = R(1, M_m) f(x) =  \int_0^{\infty} e^{-t} (S_t^m f)(x)dt,\;\; x \in \R^m.  
$$
Then, we have the following 
\begin{proposition} \label{wrr} 
For any $\lambda>0$ it holds:
\begin{equation}\label{ff}
 \sup_{x \in \R^m}\,  (\lambda)^{1/2} |D R(1, M_m) f (x)|_{\R^m} \, \le   \frac{\pi} {\sqrt 2}\,  \sup_{x \in \R^m}| f(x)|,\;\; f \in  C^{ 2}_b (\R^m). 
\end{equation}
 \end{proposition}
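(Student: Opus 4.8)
The plan is to differentiate the resolvent representation $v = R(1,M_m)f = \int_0^{\infty} e^{-t}(S_t^m f)\,dt$ under the $dt$-integral, and to bound $|D(S_t^m f)(x)|_{\R^m}$ pointwise by $\|f\|_\infty$ times a kernel which is integrable in $t$ and whose integral, after the obvious rescaling, produces the constant $\pi/\sqrt2$. Write $c_t = e^{-\lambda t}$ and $s_t = \sqrt{1-e^{-2\lambda t}}$, so that $S_t^m f(x) = \int_{\R^m} f(c_t x + s_t y)\, N(0,\tfrac{1}{2\lambda}I_m)(dy)$. Since $D_{x_j}\big[f(c_t x + s_t y)\big] = (c_t/s_t)\, D_{y_j}\big[f(c_t x + s_t y)\big]$, an integration by parts in the variable $y$ against the Gaussian density (the boundary terms vanish by the Gaussian decay, and $f\in C^2_b(\R^m)$ is more than enough to justify everything) yields the Bismut–Elworthy-type identity
$$
D_j (S_t^m f)(x) \;=\; \frac{2\lambda\, c_t}{s_t}\int_{\R^m} f(c_t x + s_t y)\, y_j\, N\!\big(0,\tfrac{1}{2\lambda}I_m\big)(dy), \qquad j=1,\dots,m,
$$
equivalently, as an $\R^m$-valued identity, $D(S_t^m f)(x) = \frac{2\lambda c_t}{s_t}\,\E\!\big[f(c_t x + s_t Y)\,Y\big]$ with $Y\sim N(0,\tfrac{1}{2\lambda}I_m)$.

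The crucial point — and the only place where dimension-freeness is at stake — is to estimate the \emph{Euclidean norm} of this vector directly, not its components one by one. By duality and the Cauchy–Schwarz inequality,
$$
\big|D(S_t^m f)(x)\big|_{\R^m} \;=\; \frac{2\lambda c_t}{s_t}\,\sup_{|\xi|\le 1}\E\big[f(c_t x + s_t Y)\,\langle Y,\xi\rangle\big]\;\le\; \frac{2\lambda c_t}{s_t}\,\|f\|_\infty \sup_{|\xi|\le 1}\sqrt{\E\langle Y,\xi\rangle^2}\;=\;\frac{2\lambda c_t}{s_t}\,\|f\|_\infty\,\frac{1}{\sqrt{2\lambda}},
$$
because $\E\langle Y,\xi\rangle^2 = |\xi|^2/(2\lambda)\le 1/(2\lambda)$ uniformly in $m$. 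Hence $|D(S_t^m f)(x)|_{\R^m}\le \sqrt{2\lambda}\,(c_t/s_t)\,\|f\|_\infty$ for every $x$ and every $m$.

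It remains to justify differentiation under the integral in $t$ and to carry out that integration. Near $t=0$ one has $s_t\sim\sqrt{2\lambda t}$, so $e^{-t}\sqrt{2\lambda}\,c_t/s_t$ is integrable on $(0,\infty)$ and, being a dominating function independent of $x$, it legitimizes $Dv(x) = \int_0^{\infty} e^{-t}\,D(S_t^m f)(x)\,dt$ (alternatively one may also use $D(S_t^m f)= c_t\,S_t^m(Df)$ to get the bound $\|Df\|_\infty$ near $t=0$). Therefore
$$
\sqrt{\lambda}\,|Dv(x)|_{\R^m}\;\le\;\sqrt{\lambda}\,\|f\|_\infty\int_0^{\infty} e^{-t}\,\frac{\sqrt{2\lambda}\,e^{-\lambda t}}{\sqrt{1-e^{-2\lambda t}}}\,dt .
$$
The substitution $r = e^{-\lambda t}$ turns this integral into $\sqrt{2\lambda}\,\lambda^{-1}\int_0^1 r^{1/\lambda}(1-r^2)^{-1/2}\,dr \le \sqrt{2/\lambda}\int_0^1 (1-r^2)^{-1/2}\,dr = \sqrt{2/\lambda}\,\cdot\frac{\pi}{2}$, and multiplying by $\sqrt\lambda$ gives exactly $\pi/\sqrt2$, uniformly in $x\in\R^m$ and in $m$. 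The one genuine obstacle is the one already flagged: a componentwise bound would cost a factor $\sqrt m$, so the gradient vector must be kept intact and one must exploit that the second moment of $\langle Y,\xi\rangle$, $|\xi|\le 1$, does not grow with the dimension; the rest is the elementary kernel computation above.
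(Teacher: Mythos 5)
Your proof is correct and follows essentially the same route as the paper's: both rest on the first-derivative formula for the Ornstein--Uhlenbeck semigroup (Gaussian integration by parts), a dimension-free bound obtained by reducing the gradient norm to the one-dimensional projection $\langle Y,\xi\rangle$ with $|\xi|\le 1$, and the integration in $t$ of the kernel $e^{-t}e^{-\lambda t}(1-e^{-2\lambda t})^{-1/2}$ giving the constant $\pi/\sqrt{2}$. The only (cosmetic) differences are that the paper first rescales $v(x)=u(\sqrt{\lambda}\,x)$ to normalize the operator and quotes the derivative formula from the literature, whereas you work directly with general $\lambda$ and derive the formula by hand.
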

\begin{proof} 
Let $v(x) = R(1, M_m) f \in C^2_b(\R^m)$. We set $v(x) = u(\sqrt{\lambda} \, x)$ and so, for $y \in \R^m$, we get
$$
u(y) - \frac{\lambda }{2}\triangle u( y) +  \lambda  \langle y , Du(y) \rangle = f(y / \sqrt{\lambda})
$$
and
$$
\frac{1}{\lambda} u(y) -  \frac{1}{2}\triangle u( y) +  \langle   y , Du(y) \rangle  = \frac{1}{\lambda} f(y / \sqrt{\lambda}) =  \tilde f(y).
$$
We have 
$$
u(x) = \int_0^{\infty} e^{-\frac{1}{\lambda} \, t} dt \int_{\R^m} \tilde f(e^{-t } x + y )  \, N \Big(0, \frac{ 1-  e^{- 2t }} {2}  \,  I_m \Big) (dy)
$$
and,  considering the directional derivative $ \langle Du(x), h\rangle = D_h u(x)$, $h \in \R^m$,  $|h|=1$, we get  
$$
D_h u(x) = 2   \int_0^{\infty} e^{-\frac{1}{\lambda} \, t} \int_{\R^m} \tilde f(e^{-t } x + y )\frac{e^{- t} }{1 - e^{- 2 t } } \, \langle h,  y \rangle  \, N \Big(0, \frac{ 1-  e^{- 2t }} {2}  \,  I_m \Big) (dy)
$$
(cf. Theorem 6.2.2 in \cite{DZ1}, \cite{D2}  or page 101 in \cite{DL}). Then, changing variable in the integral over $\R^m$ and differentiating under the integral sign, we obtain
\begin{align*}
\| D_h u \|_{\infty} &\le  2 \| \tilde f\|_{\infty} \int_0^{\infty}   \frac{e^{-\frac{1}{\lambda}\, t}  \, e^{- t} }{1 - e^{- 2 t } } dt \int_{\R^m}   
\Big |\langle h, \big (\frac{ 1-  e^{- 2t }} {2}  \big )^{1/2}      y \rangle \Big |   \, N \big(0,  \,  I_m \big) (dy) \\ 
&\le  \frac{\sqrt{2}}{\lambda}    \| f\|_{\infty} \int_0^{\infty}   \frac{e^{- t}}{(1 - e^{- 2 t })^{1/2} } dt
\int_{\R^m} \big |\langle h, y \rangle \big | \, N \big(0,\, I_m \big) dy \le  \frac{\pi} {\lambda \sqrt 2} \| f\|_{\infty}.
\end{align*}  
Since $D_h u(y) = \frac{1}{\sqrt{\lambda}} D_h v(\frac{y} {\sqrt{\lambda}} )$  we have $\| D_h u \|_{\infty} = \frac{1}{\sqrt{\lambda}} \|  D_h v  \|_{\infty} $ and \eqref{ff} follows. 
\end{proof}
 
 \vskip 1mm 
 
Let us start the proof of the main estimate \eqref{mm} concerning equation \eqref{s2} involving 
 the Ornstein-Uhlenbeck operator
 $L_m$. Similarly to the proof of Proposition \ref{wrr}     
the  solution $u \in C^{ 2}_b(\R^m)$ to \eqref{s2} is given by
\begin{equation}\label{q55}
u(x)= R(1, L_m)f(x)= \int_0^{\infty} e^{-t} (P_t^m f)(x) dt
\end{equation}
with 
\begin{align*} 
P_t^m f(x) &=  \int_{\R^m} f(e^{tA_m} x+ \sqrt{I_m - e^{2tA_m}}\, y)\; N  \Big(0 , -\frac{1}{2} A^{-1}_m \Big ) \, (dy) \\ 
&=\int_{\R^m} f(e^{tA_m} x  +  y)\; N  \big(0 , Q_t^m  \big ) \, (dy), \;\;  f \in C^2_b(\R^m), \; x \in \R^m,
\end{align*}
where 
$$
Q_t^m = \int_0^t e^{2 sA_m} ds =  (-2 A_m)^{-1}(I_m -e^{2tA_m}),\;\;\; t \ge 0
$$
($Q_t^m$ is a diagonal matrix with positive eigenvalues). Let $\mu_t^m= N  \big(0 , Q_t^m  \big )$. The following formula holds for the directional derivative of $P_t^m f$ along   $h \in \R^m$: 
\begin{equation}\label{e3} 
D_h P_t^m f(x) =  
\langle D P_t^m f(x),h
 \rangle  = \int_{\R^m} \langle
  \Lambda_t^m h,(Q_t^m)^{-\frac12} y\rangle \, f (e^{tA_m}x+y ) \mu_t^m(dy), 
  \; x \in \R^m, \; t>0,
\end{equation}
where $ \Lambda_t^m = (Q_t^m)^{-1/2}e^{tA_m};$ cf. Theorem 6.2.2 in \cite{DZ1}  or page 101 in \cite{DL}. Hence  
$$
(-A_m)^{1/2} Du^f (0) =   (-A_m)^{1/2}  D R(1, L_m)f (0) \in \R^m
$$ 
appearing in \eqref{mm} has components  
\begin{align*}
\langle  (-A_m)^{1/2} Du^f (0),e_k\rangle &=  \int_0^{\infty} e^{-t} dt \int_{\R^m} \langle
(-A_m)^{1/2} \Lambda_t^m e_k,(Q_t^m)^{-\frac12} y\rangle \, f ( y ) \mu_t^m(dy)\\
&=  \int_0^{\infty} e^{-t} dt \int_{\R^m} \langle (-A_m)^{1/2} \Lambda_t^m e_k,y\rangle \,  f ((Q_t^m)^{\frac12}  y ) \,N(0,I_m)(dy) ,\;\; k = 1, \ldots, m.
\end{align*}  
An easy calculation shows that 
\begin{align} \label{si1}
&|(-A_m)^{1/2} D  R(1, L_m)f  (0)|^{2}\\  
&=  \sum_{k=1}^m  \Big(\int_0^{\infty}\dfrac{\lambda_k  e^{-t} e^{-\lambda_k t}}{(1 - e^{-2\lambda_k t})^{1/2}}   \frac{1}{\sqrt{(2 \pi)^m} } \int_{\R^m}  f (c_1 (t) x_1, \ldots,  c_m(t)x_m)\,  x_k \, e^{- \frac{|x|^2}{2}} dx  
dt \Big)^2,  
\end{align} 
where, for $k \in \{1, \dots, m\}$ and $t\ge 0$, $c_k(t) = \left(\dfrac{1 - e^{-2\lambda_k t}}{2\lambda_k}\right)^{1/2}$ and $(Q_t^m)^{1/2} = \text{diag}[c_1(t), \dots, c_m(t)]$.

We will prove the following result.
\begin{theorem}\label{main} Let $(\lambda_k)$ be a strictly increasing sequence of positive numbers, such that 
$\lambda_k \sim k^2$ as $k \to +\infty$. Then, assertion \eqref{mm} is in force, i.e., taking into account \eqref{si1}, there holds  
$$
\sup_{m \in \N} \ \sup_{\substack{f \in  C^{ 2}_b (\R^m) \\ \|f\|_{\infty} \leq 1}} \sum_{k=1}^m \left(\int_0^{\infty}\dfrac{\lambda_k  e^{-t} e^{-\lambda_k t}}{(1 - e^{-2\lambda_k t})^{1/2}} 
\frac{1}{\sqrt{(2 \pi)^m} } \int_{\R^m}  f (c_1 (t) x_1, \ldots, c_m(t)x_m)\,  x_k \, e^{- \frac{|x|^2}{2}} dx dt  \right)^2 = +\infty.
$$ 
\end{theorem}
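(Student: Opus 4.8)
The plan is to exploit the exact identity \eqref{si1} and to reduce the statement to the following: for every $J\in\N$ there exist a dimension $m$ and one admissible $f$ for which the $k$-sum is at least $\kappa^2\sum_{i=1}^J 1/i$, with $\kappa>0$ absolute. For this I abbreviate $V_k(f):=\int_0^\infty g_k(t)\,\E\big[f(c_1(t)Z_1,\dots,c_m(t)Z_m)\,Z_k\big]\,dt$, where $g_k(t):=\lambda_k e^{-t}e^{-\lambda_k t}(1-e^{-2\lambda_k t})^{-1/2}$ and $Z=(Z_1,\dots,Z_m)\sim N(0,\Idm)$, so that \eqref{si1} reads $|(-A_m)^{1/2}Du^f(0)|^2=\sum_{k=1}^m V_k(f)^2$. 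Since $\lambda_k\sim k^2\to+\infty$, I first fix a subsequence $k_1<k_2<\cdots$ with $\lambda_{k_{j+1}}\ge 4\lambda_{k_j}$ for all $j$ (this geometric gap is the only property of $\lambda_k\sim k^2$ actually used). Given $J$, set $m:=k_J$ and take the single test function $f(x):=\psi_\epsilon(x_{k_1}+x_{k_2}+\cdots+x_{k_J})$, where $\psi_\epsilon\in C^2_b(\R)$ is odd, $|\psi_\epsilon|\le 1$, $\psi_\epsilon'\ge 0$, $\psi_\epsilon\equiv\pm1$ off $(-\epsilon,\epsilon)$ (so $\int_\R\psi_\epsilon'=2$); then $f\in C^2_b(\R^m)$, $\|f\|_\infty\le 1$, and the value $\epsilon=\epsilon_J>0$ is fixed (small) at the end. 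The idea is that one such $f$ "looks different at every time scale": the mode $k_i$ is seen by $V_{k_i}$ essentially near its own scale $t\sim 1/\lambda_{k_i}$, where only the $\sim i$ slower modes $k_1,\dots,k_i$ are genuinely active.

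Next I compute the components. For $k\notin\{k_1,\dots,k_J\}$ one has $V_k(f)=0$, since $Z_k$ is independent of the argument of $\psi_\epsilon$ and $\E Z_k=0$. For $k=k_i$, put $G_t:=\sum_{j=1}^J c_{k_j}(t)Z_{k_j}$, a centred Gaussian with variance $\sigma(t)^2:=\sum_{j=1}^J c_{k_j}(t)^2$; decomposing $Z_{k_i}=\frac{c_{k_i}(t)}{\sigma(t)^2}G_t+(\text{independent remainder})$ and integrating by parts in the Gaussian variable $G_t$ gives $\E[\psi_\epsilon(G_t)\,Z_{k_i}]=c_{k_i}(t)\,\E[\psi_\epsilon'(G_t)]\ge 0$, hence $V_{k_i}(f)=\int_0^\infty g_{k_i}(t)\,c_{k_i}(t)\,\E[\psi_\epsilon'(G_t)]\,dt$ with a nonnegative integrand.

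Then I bound $V_{k_i}(f)$ from below by restricting the $t$-integral to $I_i:=[\,1/\lambda_{k_i},\,2/\lambda_{k_i}\,]$. On $I_i$ the elementary estimate $c_k(t)^2=\frac{1-e^{-2\lambda_k t}}{2\lambda_k}\le\min\{t,\tfrac{1}{2\lambda_k}\}$ yields $\sigma(t)^2\le it+\sum_{j=i+1}^J\frac{1}{2\lambda_{k_j}}\le it+\frac{1}{6\lambda_{k_i}}\le\frac{3i}{\lambda_{k_i}}$ (the tail is summed by the geometric gap $\lambda_{k_j}\ge 4^{\,j-i}\lambda_{k_i}$), while $c_{k_i}(t)^2\ge\frac{1-e^{-2}}{2\lambda_{k_i}}$; thus $\frac{c_{k_i}(t)}{\sigma(t)}\ge\frac{c_0}{\sqrt i}$ for an absolute $c_0>0$, and also $\sigma(t)\ge c_{k_i}(t)\ge\big(\tfrac{1-e^{-2}}{2\lambda_{k_J}}\big)^{1/2}=:\delta_J>0$ for every $i\le J$. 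Choosing $\epsilon=\epsilon_J\le\delta_J/10$, on $I_i$ one gets $\E[\psi_\epsilon'(G_t)]\ge\frac{2}{\sqrt{2\pi}\,\sigma(t)}e^{-\epsilon_J^2/(2\sigma(t)^2)}\ge\frac{c_1}{\sigma(t)}$ with $c_1>0$ absolute, while the substitution $u=\lambda_{k_i}t$ gives $\int_{I_i}g_{k_i}(t)\,dt=\int_1^2\frac{e^{-u/\lambda_{k_i}}e^{-u}}{(1-e^{-2u})^{1/2}}\,du\ge e^{-2/\lambda_{k_1}}\int_1^2\frac{e^{-u}}{(1-e^{-2u})^{1/2}}\,du=:c_2>0$. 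Multiplying, $V_{k_i}(f)\ge\frac{c_0c_1}{\sqrt i}\int_{I_i}g_{k_i}(t)\,dt\ge\frac{c_0c_1c_2}{\sqrt i}=:\frac{\kappa}{\sqrt i}$ with $\kappa>0$ independent of $i,J,m$. Therefore $\sum_{k=1}^m V_k(f)^2\ge\sum_{i=1}^J V_{k_i}(f)^2\ge\kappa^2\sum_{i=1}^J\frac1i\to+\infty$ as $J\to+\infty$, which is exactly the assertion of the theorem.

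The hard part is precisely the scale-localized inequality $\frac{c_{k_i}(t)}{\sigma(t)}\gtrsim i^{-1/2}$, uniform over $t\in I_i$ and over $i\le J$: one must verify that at the natural time scale $t\sim 1/\lambda_{k_i}$ of the $i$-th mode the $i$ slower modes each contribute an amount $\asymp 1/\lambda_{k_i}$ to $\sigma(t)^2$ while, thanks to $\lambda_{k_{j+1}}\ge4\lambda_{k_j}$, the faster modes are negligible; this is the mechanism turning the order-one bound available at a single scale (compare Proposition \ref{wrr}) into the divergent harmonic series. Everything else — the vanishing of $V_k(f)$ off the selected indices, the Gaussian integration by parts, the lower bound on $\E[\psi_\epsilon'(G_t)]$, and the choice of a $J$-dependent smoothing parameter $\epsilon_J$ — is routine.
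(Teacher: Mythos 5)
Your argument is correct, and it reaches the conclusion by a genuinely different route from the one in the paper. The paper also tests against odd functions of a single linear functional, but it takes $f_n(x)=F_n(x_1+\cdots+x_m)$ with \emph{all} coordinates weighted equally, lets $F_n$ increase to the sign function $F_0$, and evaluates the resulting Gaussian integral \emph{exactly} via polar coordinates (Lemmas \ref{aux} and \ref{due}, giving $I_{m,k}(F_0)=\sqrt{2/\pi}\,c_k/|c|$); divergence then comes from the quantitative bound $|c(t)|\le (2\pi t/c_0)^{1/4}$, which uses $\lambda_k\ge c_0k^2$, and produces $\sum_{k\le m}\lambda_k^{-1/2}\sim\log m$. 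You instead keep a fixed smooth odd $\psi_{\epsilon}$, sum only over a lacunary block $k_1<\cdots<k_J$ with $\lambda_{k_{j+1}}\ge 4\lambda_{k_j}$, and replace the polar-coordinate evaluation by Stein's identity $\E[\psi_\epsilon(G_t)Z_{k_i}]=c_{k_i}(t)\,\E[\psi_\epsilon'(G_t)]$ together with a pointwise lower bound on the Gaussian density near the origin -- which is exactly the ``soft'' form of the paper's identity \eqref{ciao}, since there too the answer is $c_k/|c|$ times the density factor. The trade-offs are clear: the paper's computation gives the sharper rate $\log m$ and an explicit constant, whereas your localization of each mode $k_i$ to its own time window $[1/\lambda_{k_i},2/\lambda_{k_i}]$ yields only the harmonic series $\sum_{i\le J}1/i\sim\log J$, i.e.\ roughly $\log\log m$; on the other hand your proof is more elementary (no $m$-dimensional spherical coordinates, no monotone-convergence passage to a discontinuous $F_0$) and visibly uses only that $\lambda_k\to\infty$ (enough to extract the geometric gaps), so it shows the hypothesis $\lambda_k\sim k^2$ in Theorem \ref{main} is not needed for the counterexample, only for the stronger $\log m$ lower bound. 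All the individual steps I checked -- the vanishing of $V_k$ off the selected indices, the variance comparison $\sigma(t)^2\le 3i/\lambda_{k_i}$ on $I_i$ via the gap condition, the bound $\E[\psi_\epsilon'(G_t)]\ge c_1/\sigma(t)$ for $\epsilon_J\le\delta_J/10$, and the change of variables in $\int_{I_i}g_{k_i}$ -- are sound.
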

The proof of the theorem is given in Section 3. Next, we discuss an application of Theorem \ref{main} to infinite dimensions, see Corollary \ref{srr}.

\subsection{An infinite dimensional  Ornstein-Uhlenbeck semigroup}

Let $H$ be a real separable Hilbert space with inner product $\langle \cdot , \cdot \rangle$. Let $Q: H \to H$ be a symmetric non-negative definite trace class operator. The centered Gaussian measure  $\mu = N(0,Q)$  is the unique  probability measure on the Borel $\sigma$-algebra of $H$ such that 
\begin{equation}\label{fu}
 \int_H e^{i \langle x,h\rangle } \mu (dx) = e^{-\frac{1}{2} \langle Qh,h \rangle },\;\; h \in H  
\end{equation}
(cf. \cite{DFPR}).  
 We denote by $B_b(H)$ the Banach space of all Borel and bounded real functions  endowed with the supremum norm $ \| \cdot \|_{\infty}$.  Moreover,  $C^{2}_b (H) \subset {B}_b(H)$ is the  space of all functions  which are bounded and Fr\'echet differentiable on $H$ up to the  second order  with all the derivatives $D^j f$ bounded and continuous on $H$, $ j =1,  2$. 
According to Chapter  1 in \cite{DZ1} we can rewrite the OU semigroup $(P_t)$ in \eqref{ou} as follows
\begin{equation} \label{ou3}
P_t f (x)  = \int_{H} f(e^{tA} x+  y)\;  N  \big(0 ,  Q_t \big ) \, (dy),\;\; f \in { B}_b (H),\; x \in H, 
\end{equation}
where $Q_t = \int_0^t e^{2 sA} ds =  (-2 A)^{-1}(I-e^{2tA}),\; t \ge 0,$ and $A$ is given in \eqref{qsd}.
Suppose that   $f \in C^2_b(H)$ is  also {\sl  cylindrical,} i.e., there exists $m \ge 1$  and $\tilde f \in C_b^2 (\R^m)$ such that \eqref{cil2} holds. This is equivalent to require that $f = f \circ \pi_m$, using the 
finite dimensional approximations   $\pi_m=\sum_{j=1}^me_j\otimes e_j$.

Identifying $H $ with $l^2$, we have: $f(e^{tA} x + y) = \tilde f (e^{tA_m}x^{(m)} + y^{(m)})$ using the notation
$$
h^{(m)} = (\langle  h, e_{1}\rangle, \ldots, \langle  h, e_{m} \rangle) \in \R^m,\;\; \text{for any} \; h \in H\ ,
$$
while $A_m$ is the same matrix given in \eqref{s2} and \eqref{q55}. 
Moreover, $N  \big(0 , - (2 A)^{-1} \big ) = N  \big(0 , - (2 A_m)^{-1} \big ) \times \nu_m $ where $\nu_m = \prod_{k= m+1}^{\infty} N(0,  (2 \lambda_k)^{-1} )$; see Theorem 1.2.1 in  \cite{DZ1}. It follows that,  for any $x \in H$,
\begin{gather*}
P_t f (x)=  P_t^m (\tilde f)( x^{(m)}) =
 \int_{\R^m } \tilde f( e^{tA_m} x^{(m)} + \sqrt{I_m - e^{2tA_m}}\,  y  )  \;     N  \big(0 , - (2 A_m)^{-1} \big ) \, (dy); 
\\    
u(x) =  R(1,L)f(x) =
   \int_0^{\infty} e^{-t} (P_t f)(x)dt = \tilde u (\langle  x, e_{1}\rangle, \ldots, \langle  x, e_{m} \rangle);  
  \\ 
  \tilde u (z) = \int_0^{\infty} e^{-t} P_t^m \tilde f(z)
  ,\;\; z \in \R^m.
\end{gather*}
Recall that  $P_t^m $ is given in \eqref{q55}. 
Setting    $\mu_m =  N  \big(0 , - (2 A_m)^{-1} \big )$ and using that $C^2_b(H)$ contains in particular cylindrical functions as in \eqref{cil2} we infer, for any $m \ge 1$,   
\begin{equation}\label{qww}
\sup_{\substack{\tilde f \in C^{ 2}_b(\R^m) \\ \| \tilde f\|_{L^p(\R^m, \mu_m)} \leq 1}} \,  \| (-A_m)^{1/2} D \tilde u \|_{L^p(\R^m, \mu_m)}
\le {\sup_{\substack{f \in C^{ 2}_b(H) \\ \|f\|_{L^p(H, \mu)} \leq 1}}} \| (-A)^{1/2} Du \|_{L^p(H, \mu)},\;\;\;\; 1<p< \infty, 
\end{equation}
and 
\begin{equation}\label{qww1}
\sup_{\substack{ \tilde f \in C^{ 2}_b(\R^m),\\ \|\tilde f\|_{\infty} \leq 1}} \,  \| (-A_m)^{1/2} D \tilde u \|_{\infty} \le \sup_{\substack{f \in C^{ 2}_b(H), \\  \|f\|_{\infty} \leq 1}} \, \sup_{x \in H } \, | (-A)^{1/2} Du (x)|_{H}.     
\end{equation}    
As a consequence of Theorem \ref{main} we obtain (see \eqref{uu})
\begin{corollary} \label{srr}
Under the same assumptions of Theorem \ref{main}, there holds 
$$   
\sup_{\substack{f \in C_b^{2}(H),\;\; \|f\|_{\infty} \leq 1}} | (-A)^{1/2} D (R(1, L) f)\, (0)|_H = \infty.
$$
\end{corollary}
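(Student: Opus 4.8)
The plan is to obtain Corollary \ref{srr} as an immediate consequence of Theorem \ref{main}: the supremum defining the infinite-dimensional quantity dominates its restriction to cylindrical test functions, and that restriction is precisely the finite-dimensional quantity that Theorem \ref{main} shows to be unbounded. So the whole argument is a matter of bookkeeping with the reduction already carried out in Section 2.2.

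Concretely, first I would recall from Section 2.2 that if $f \in C^2_b(H)$ is cylindrical, say $f(x) = \tilde f(\langle x, e_1\rangle, \dots, \langle x, e_m\rangle)$ with $\tilde f \in C^2_b(\R^m)$ (so that $\|f\|_\infty = \|\tilde f\|_\infty$), then $u = R(1,L)f$ is again cylindrical, $u(x) = \tilde u(\langle x, e_1\rangle, \dots, \langle x, e_m\rangle)$ with $\tilde u = R(1,L_m)\tilde f \in C^2_b(\R^m)$ solving \eqref{s2}; moreover $Du(x) \in D((-A)^{1/2})$ for every $x$. Next I would note that, since $u$ depends only on the first $m$ coordinates, $\langle Du(0), e_k\rangle = D_k\tilde u(0)$ for $1 \le k \le m$ and $\langle Du(0), e_k\rangle = 0$ for $k > m$; hence
\[
|(-A)^{1/2} D(R(1,L)f)(0)|_H^2 = \sum_{k=1}^m \lambda_k\,(D_k\tilde u(0))^2 = |(-A_m)^{1/2} D(R(1,L_m)\tilde f)(0)|_{\R^m}^2,
\]
which is exactly the quantity appearing in \eqref{si1} and in the statement of Theorem \ref{main}.

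Finally, since the cylindrical functions above run through a subset of $\{f \in C^2_b(H) : \|f\|_\infty \le 1\}$ as $m$ ranges over $\N$ and $\tilde f$ over the unit ball of $C^2_b(\R^m)$ with respect to $\|\cdot\|_\infty$, taking suprema gives
\[
\sup_{\substack{f \in C^2_b(H) \\ \|f\|_\infty \le 1}} |(-A)^{1/2} D(R(1,L)f)(0)|_H^2 \ \ge\ \sup_{m \in \N}\ \sup_{\substack{\tilde f \in C^2_b(\R^m) \\ \|\tilde f\|_\infty \le 1}} |(-A_m)^{1/2} D(R(1,L_m)\tilde f)(0)|_{\R^m}^2 \ =\ +\infty,
\]
the last equality being Theorem \ref{main} rewritten through \eqref{si1}; this proves the claim. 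The only step requiring any care is the passage to cylindrical functions in the first two sentences, but this is precisely what Section 2.2 establishes (and what is summarized in \eqref{qww1}), so there is no genuine obstacle here: all the analytic difficulty has already been absorbed into Theorem \ref{main}.
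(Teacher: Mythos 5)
Your proposal is correct and follows essentially the same route as the paper: the corollary is obtained from Theorem \ref{main} by restricting the supremum to cylindrical functions $f = \tilde f(\langle\cdot,e_1\rangle,\dots,\langle\cdot,e_m\rangle)$, using the identification $u = \tilde u \circ \pi_m$ with $\tilde u = R(1,L_m)\tilde f$ from Section 2.2 (the pointwise-at-$0$ identity you write is the exact analogue of \eqref{qww1}), and then letting $m \to \infty$. No gaps.
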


\section{ Proof of Theorem \ref{main}  }

Let $\delta \in (0,+\infty)$. Then, put
\begin{equation*} 
 S_m = S_m (\delta) =  
\ \sup_{\substack{f \in  C^{ 2}_b (\R^m) \\ \|f\|_{\infty} \leq 1}} \sum_{k=1}^m \Big(\int_0^{\delta} \dfrac{\lambda_k e^{-\lambda_k t}}{(1 - e^{-2\lambda_k t})^{1/2}}    \int_{\R^m}  f (c_1 (t) x_1, \ldots, c_m(t)x_m)\,  x_k \, \frac{e^{- \frac{|x|^2}{2}}}{{\sqrt{(2 \pi)^m} } }  dx dt \Big)^2\ .  
\end{equation*}
If we show that 
\begin{equation} \label{main0} 
 \sup_{m \ge 2} S_m = \infty
\end{equation}
holds under the assumption that $\lambda_k \sim k^2$ as $k \to +\infty$, then the validity of Theorem \ref{main} will follow.

\subsection{Two useful  lemmas} 

The following identity will be important. Recall that $x_k = \langle x, e_k \rangle $, $k =1, \ldots, m$ where $(e_j)$ denotes the canonical basis in $\R^m. $
\begin{lemma}\label{aux} 
For any $m \geq 2$, $k  \in \{1, \dots, m\}$, $ c = (c_1, \dots, c_m) \in \Rm\setminus\{0\}$ and $F \in B_b(\RE)$, it holds 
\begin{align} \label{first}
I_{m,k}(F) &= \frac{1}{\sqrt{(2 \pi)^m} } \int_{\R^m}  F(\langle c ,  x \rangle ) x_k \, e^{- \frac{|x|^2}{2}} dx  \\ \nonumber 
&= \dfrac{2\pi (\sqrt{\pi})^{m-3}}{(2\pi)^{m/2} \Gamma\left(\frac{m-1}{2}\right)} \frac{c_k}{| c|}\ \int_0^{+\infty}\!\!\!\int_0^{\pi} e^{-\frac 12\rho^2} \rho^m
\cos\vartheta (\sin\vartheta)^{m-2} F(| c| \rho\cos\vartheta) \ud\rho\ud \vartheta.
\end{align}   
\end{lemma}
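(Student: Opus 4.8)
\textbf{Proof plan for Lemma \ref{aux}.}

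The plan is to reduce the $m$-dimensional Gaussian integral defining $I_{m,k}(F)$ to a two-dimensional integral by exploiting rotational symmetry. First I would observe that the integrand depends on $x$ only through the two quantities $\langle c, x\rangle$ and $x_k$, so it is natural to choose an orthonormal basis of $\R^m$ adapted to the vector $c$. Concretely, let $v_1 = c/|c|$ and complete $\{v_1\}$ to an orthonormal basis $\{v_1,\dots,v_m\}$; writing $x = \sum_j y_j v_j$, the change of variables is orthogonal, so $|x|^2 = |y|^2$ and the Gaussian density and Lebesgue measure are preserved. Under this substitution $\langle c,x\rangle = |c|\, y_1$, while $x_k = \langle x, e_k\rangle = \sum_j y_j \langle v_j, e_k\rangle$. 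Integrating out the components $y_j$ with $j \ge 2$ against the standard Gaussian, the only surviving term from $x_k$ is the one proportional to $y_1$ (all other first moments vanish and the $j\ge2$ Gaussian integrals of $1$ equal $1$), and $\langle v_1, e_k\rangle = c_k/|c|$. This leaves
$$
I_{m,k}(F) = \frac{c_k}{|c|}\cdot \frac{1}{\sqrt{(2\pi)^m}}\int_{\R^m} F(|c|\,y_1)\, y_1\, e^{-|y|^2/2}\, dy,
$$
and the integrals over $y_2,\dots,y_m$ still need to be accounted for — but since the integrand no longer depends on them, one might worry the integral reduces to a one-dimensional one; the point of the stated formula is instead to keep one radial and one angular variable, which is the natural form for the subsequent estimates in Section 3.

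So the second step is: in the remaining integral, do \emph{not} integrate out $y_2,\dots,y_m$ immediately, but instead pass to (generalized) spherical coordinates in $\R^m$ with the polar axis along $v_1$. Set $y_1 = \rho\cos\vartheta$, $\rho = |y| \in (0,\infty)$, $\vartheta \in (0,\pi)$ the angle to the $v_1$-axis, and let $\omega \in S^{m-2}$ parametrize the remaining directions. The volume element becomes $\rho^{m-1}(\sin\vartheta)^{m-2}\, d\rho\, d\vartheta\, d\sigma_{m-2}(\omega)$, where $\sigma_{m-2}$ is the surface measure on $S^{m-2}$. Since the integrand $F(|c|\rho\cos\vartheta)\,\rho\cos\vartheta\, e^{-\rho^2/2}$ does not depend on $\omega$, the $\omega$-integration simply contributes the total surface area $|S^{m-2}| = 2\pi^{(m-1)/2}/\Gamma\!\left(\frac{m-1}{2}\right)$. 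Collecting the constants $\frac{1}{\sqrt{(2\pi)^m}}\cdot \frac{2\pi^{(m-1)/2}}{\Gamma\left(\frac{m-1}{2}\right)}$ and simplifying $\pi^{(m-1)/2} = (\sqrt\pi)^{m-1} = \pi\,(\sqrt\pi)^{m-3}$ gives exactly the prefactor $\frac{2\pi(\sqrt\pi)^{m-3}}{(2\pi)^{m/2}\Gamma\left(\frac{m-1}{2}\right)}$ in \eqref{first}, and the extra factor $\rho$ from the $y_1$ term combines with $\rho^{m-1}$ to yield $\rho^m$. This produces the claimed identity.

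The only genuinely delicate point is bookkeeping: making sure the power of $\rho$ is $\rho^m$ (one from $\rho^{m-1}$ in the volume element, one from the linear factor $y_1 = \rho\cos\vartheta$), that the constant is matched exactly (the $(\sqrt\pi)^{m-3}$ rather than $(\sqrt\pi)^{m-1}$ is just algebraic rearrangement, and the leftover $(2\pi)^{m/2}$ in the denominator comes from $\sqrt{(2\pi)^m}$), and that $F$ being merely bounded Borel causes no integrability issue — which is clear since $e^{-\rho^2/2}\rho^m$ is integrable and the angular weight is bounded. One should also note the formula is trivially valid when $c_k = 0$ (both sides vanish: the left side by oddness in $x_k$ after the orthogonal reduction) and that the case $m=2$ is included, where $(\sin\vartheta)^{m-2} = 1$ and $(\sqrt\pi)^{m-3} = \pi^{-1/2}$, so the prefactor is still well-defined since $\Gamma(1/2) = \sqrt\pi$. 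I expect essentially no obstacle here beyond careful tracking of normalizing constants; the conceptual content is entirely the adapted orthogonal change of variables plus spherical coordinates.
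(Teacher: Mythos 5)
Your proposal is correct and follows essentially the same route as the paper: an orthonormal basis adapted to $c$ with first vector $c/|c|$, so that only the component of $e_k$ along $c$ survives, followed by spherical coordinates with polar axis along $c/|c|$, the surface area $|S^{m-2}|=2\pi^{(m-1)/2}/\Gamma\!\left(\tfrac{m-1}{2}\right)$ giving exactly the paper's constant $R_m$. The only (immaterial) difference is the order of operations — you kill the cross terms $y_j\langle v_j,e_k\rangle$, $j\ge 2$, by the vanishing of Gaussian first moments before passing to polar coordinates, whereas the paper passes to polar coordinates first and verifies the same cancellation through the angular integrals \eqref{sd}.
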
 

\begin{proof}  We provide additional details for the sake of completeness. 
Let us first consider $m=2$. We introduce the unitary vectors $\gamma_1 = c/|c|$ and $\gamma_2 \in \R^2$ such that $(\gamma_1, \gamma_2)$ is an orthonormal basis in $\R^2$. Using the polar coordinates with respect to such basis we can write
$$
x = \rho \cos \theta \, \gamma_1 + \rho \sin \theta \, \gamma_2, 
$$
which entails that
\begin{align*}
I_{2,k}(F)&= \frac{1}{{2 \pi} } \int_0^{2\pi}  \int_{\R} \rho^2  F(|c| \rho \cos \theta ) \, ( \cos \theta \, \langle \gamma_1, e_k \rangle  +  \sin  \theta \, \langle \gamma_2 , e_k \rangle) \, e^{-\frac{\rho^2}{2}} d \rho d \theta\\
&= \frac{1}{{ \pi} } \int_0^{\pi}  \int_{\R} \rho^2  F(|c| \rho \cos \theta ) \, \cos \theta \, \langle \gamma_1 , e_k \rangle  \, e^{-\frac{\rho^2}{2}} d \rho d \theta,\;\;\; k =1,2
\end{align*}
since $\int_0^{2\pi} F(|c| \rho \cos \theta )\, \sin\theta\, d \theta =0$. We get easily \eqref{first} for $m=2$ recalling that $\Gamma (1/2) = \sqrt{\pi}$.

In the general case of $m \ge 3$, we consider an orthonormal basis $(\gamma_k)$ of $\R^m$ where $\gamma_1 = c/|c|$. Then, we introduce polar coordinates with respect to $(\gamma_k)$. Let $\rho = |x|$. Proceeding similarly to \cite[Section 5.9]{fleming}, we have, for $x \not = 0$, 
$$
 x = \rho \cos \theta_1 \gamma_1 +  \rho \sin \theta_1\cos \theta_2   \gamma_2
 + \ldots +  \rho \sin \theta_1 \cdots \sin \theta_{m-2}  \sin \theta_{m-1} \gamma_m, 
 $$
where $\rho >0$ (radial distance), $\theta_1 , \ldots , \theta_{m-2} \in [0, \pi]$ (latitudes; $\theta_1$ is the angle between $x$ and $\gamma_1$) and $\theta_{m-1} \in [0, 2 \pi]$ (longitude). 
Let $\theta = (\theta_1, \ldots, \theta_{m-1})$. Denote by  
$$
J(\rho, \theta) = \rho^{m-1} (\sin \theta_1)^{m-2} (\sin \theta_2)^{m-3} \cdots  (\sin \theta_{m-2})
$$    
the Jacobian determinant. Moreover, set $\gamma_i^{(k)} =\langle \gamma_i, e_k \rangle$, for $i,k =1, \ldots, m$. Let 
 \begin{gather*}
 \xi_1 (\theta) =  \cos \theta_1 , \;\;  \xi_2 (\theta) =   \sin \theta_1\cos \theta_2, \; \ldots,   
 \\ \xi_{m-1} (\theta) =  \sin \theta_1 \cdots \sin \theta_{m-2}  \cos \theta_{m-1},\;\;\;  \xi_{m} (\theta) =  \sin \theta_1 \cdots \sin \theta_{m-2}  \sin \theta_{m-1}.
\end{gather*}
For instance, for $m=4$, we have: $\xi_1 (\theta) =  \cos \theta_1 , \;\;  \xi_2 (\theta) =   \sin \theta_1\cos \theta_2$, $\xi_3(\theta) = \sin \theta_1 \sin \theta_2 \cos \theta_3$, 
$\xi_4(\theta)= \sin \theta_1\sin \theta_2  \sin \theta_3$, with $\theta_1, \theta_2 \in [0, \pi]$ and $\theta_3 \in [0, 2 \pi]$. 
We infer that
\begin{align} \label{ok} 
I_{m,k}(F) &= \frac{1}{\sqrt{(2 \pi)^m} } \int_0^{\infty} \int_{[0, \pi]^{m-2} \times [0, 2\pi]}  \rho e^{-\frac{\rho^2}{2}} F(|c|\, \rho \cos \theta_1) \, \left(\sum_{i=1}^{m} \xi_i(\theta) \gamma_i^{(k)} \right)
J(\rho, \theta) d \rho d \theta \nonumber \\
&=  \frac{1}{\sqrt{(2 \pi)^m} } \int_0^{\infty} \int_{[0, \pi]^{m-2} \times [0, 2\pi]}  \rho e^{-\frac{\rho^2}{2}} F(|c|\, \rho \cos \theta_1) \, \xi_1(\theta) \gamma_1^{(k)} \, J(\rho, \theta) d \rho d \theta
\end{align}
using that 
\begin{equation} \label{prova}
\frac{1}{\sqrt{(2 \pi)^m} } \int_0^{\infty} \int_{[0, \pi]^{m-2} \times [0, 2\pi]}  \rho e^{-\frac{\rho^2}{2}} F(|c|\, \rho \cos \theta_1) \, \left(\sum_{i=2}^{m} \xi_i(\theta) \gamma_i^{(k)} \right) J(\rho, \theta) d \rho d \theta =0.
\end{equation}
In order to prove  \eqref{prova} we check that if   $\rho>0$ then
\begin{equation}\label{sd}
\int_{[0, \pi]^{m-2} \times [0, 2\pi]}    F(|c|\, \rho \cos \theta_1) \, \xi_i(\theta)  J(\rho, \theta) d \theta =0,\;\;\; 2 \le i \le m.
\end{equation}
If $i = m$, we find that   
\begin{align*}
&\int_{[0, \pi]^{m-2} \times [0, 2\pi]}  \,  F(|c|\, \rho \cos \theta_1)\,  \xi_m(\theta)\,  (\sin \theta_1)^{m-2}  (\sin \theta_2)^{m-3}  \cdots  (\sin \theta_{m-2}) d\theta \\
&=    \int_0^{\pi} F(|c|\, \rho \cos \theta_1)  (\sin \theta_1)^{m-2} \sin \theta_1 d \theta_1 \times \\
&\times  \int_{[0, \pi]^{m-3} \times [0, 2\pi]}   \,  \sin \theta_2 \cdots       \sin \theta_{m-1}\,   (\sin \theta_2)^{m-3}   \cdots  (\sin \theta_{m-2}) d\theta_2 \cdots d \theta_{m-1} =0
\end{align*}
by the Fubini theorem, since $\int_0^{2 \pi}  \sin \theta_{m-1} d \theta_{m-1} =0.$ Similarly we obtain that \eqref{sd} holds with $i = m-1$. 
Note that  up to now we have already proved \eqref{sd} when $m=3$. Let $m \ge 4$. We check   \eqref{sd}   when $2  < i  \le m-2$. We have
\begin{align*}
&\int_{[0, \pi]^{m-2} \times [0, 2\pi]}  \,  F(|c|\, \rho \cos \theta_1)\, \xi_i(\theta)\, (\sin \theta_1)^{m-2}  (\sin \theta_2)^{m-3}  \cdots  (\sin \theta_{m-2}) d\theta \\
&= \int_0^{\pi} F(|c|\, \rho \cos \theta_1)  (\sin \theta_1)^{m-2} \sin \theta_1 d \theta_1 \times \\
&\times\int_{[0, \pi]^{m-3} \times [0, 2\pi]}   \,  \sin \theta_2 \cdots \cos \theta_{i}\, (\sin \theta_2)^{m-3}   \cdots  (\sin \theta_{m-2}) d\theta_2 \cdots d \theta_{m-1} =0, 
\end{align*} 
because  $\int_0^{ \pi}  \cos  \theta_{i} \,  (\sin \theta_{i})^{m-1 -i} d \theta_{i} =0.$ Similarly, for $i=2$, we get
\begin{align*}
& \int_0^{\pi} F(|c|\, \rho \cos \theta_1)  (\sin \theta_1)^{m-2} \sin \theta_1 d \theta_1 \times \\ 
& \times \int_{[0, \pi]^{m-3} \times [0, 2\pi]}\, \cos \theta_{2}\, (\sin \theta_2)^{m-3}   \cdots  (\sin \theta_{m-2}) d\theta_2 \cdots d \theta_{m-1} =0.  
\end{align*} 
We have verified   \eqref{sd} and so \eqref{ok} holds.  We rewrite \eqref{ok} as follow 
\begin{gather} \label{sw1} 
I_{m,k}(F) =  R_m \, 
\frac{  \gamma_1^{(k)}  }{\sqrt{(2 \pi)^m} } \int_0^{\infty} \int_{0}^{\pi}
\rho^m e^{-\frac{\rho^2}{2}} F(|c|\, \rho \cos \theta_1) \, 
   \cos \theta_1\,  (\sin \theta_1)^{m-2}  d \rho d \theta_1,\;\;\;  \gamma_1^{(k)}
   = \frac{c_k}{|c|},    
\end{gather}
 where  $R_m = 2 \pi$ if $m=3$ and if $m>3$ 
 \begin{align*} 
R_m &= \int_{[0, \pi]^{m-3} \times [0, 2\pi]}  \, (\sin \theta_2)^{m-3}    (\sin \theta_3)^{m-4} \cdots  \sin \theta_{m-2} d\theta_2 \cdots d \theta_{m-1}\\  
&= 2 \pi \prod_{j=1}^{m-3} \int_0^{\pi} (\sin \phi)^j d \phi = 2 \pi  \prod_{j=1}^{m-3} B\left(\frac{j+1}{2}, \frac{1}{2} \right) = 2 \pi  \prod_{j=1}^{m-3} 
\frac{ \Gamma\left( \frac{j+1}{2}\right) \Gamma(\frac{1}{2})}{\Gamma\left(\frac{j+2}{2} \right)}.    
\end{align*}
  We have used the Beta function $B(\cdot, \cdot)$ (cf.   page 103  of \cite{spiegel}).   Hence since $\Gamma(1/2) = \sqrt{\pi}$, we get
  \begin{gather*}\label{rm}  
 R_m =  
  2 \pi  (\sqrt{\pi})^{m-3}   \Big ( {\Gamma  \big( \frac{m-1}{2} \big)} \Big)^{-1}.    
\end{gather*} 
Inserting $R_m$ in \eqref{sw1} we obtain \eqref{first}, i.e., 
\begin{gather*} 
 I_{m,k} (F)=
  \dfrac{2\pi (\sqrt{\pi})^{m-3}}{(2\pi)^{m/2} \Gamma\left(\frac{m-1}{2}\right)} \frac{c_k}{| c|}\ \int_0^{+\infty}\!\!\!\int_0^{\pi} e^{-\frac 12\rho^2} \rho^m
\cos\vartheta (\sin\vartheta)^{m-2} F(| c| \rho\cos\vartheta) \ud\rho\ud \vartheta.
\end{gather*}
\end{proof}

\begin{lemma} \label{due} 
If $F \in B_b(\R)$ verifies $F(x) = - F(-x)$ for any $x \in \R$, then we have, 
for any $m \geq 2$, $k  \in \{1, \dots, m\}$, $ c = (c_1, \dots, c_m) \in \Rm\setminus\{0\}$,
\begin{equation}\label{aqw}
I_{m,k} (F)
= 
\dfrac{  4\pi (\sqrt{\pi})^{m-3}}{(2\pi)^{m/2} \Gamma\left(\frac{m-1}{2}\right)} \frac{c_k}{| c|}\ \int_0^{+\infty}\!\!\! e^{-\frac 12\rho^2} \rho^m \ud\rho  
\int_0^1 x (1-x^2)^{\frac{m-3}{2}} F(|c| \rho x) \ud x 
\end{equation}
(cf. \eqref{first}). In the special case of   $F= F_0 := \ind_{(0, \infty)} - \ind_{(- \infty, 0)}$, we obtain 
\begin{gather} \label{ciao} 
 I_{m,k}(F_0) = 
  \frac{1}{\sqrt{(2 \pi)^m} } \int_{\R^m}  F_0(\langle c, x\rangle) x_k \, e^{- \frac{|x|^2}{2}} dx  = \frac{\sqrt{2}}{\sqrt{\pi}} \frac{c_k}{| c|}.
 \end{gather}
\end{lemma}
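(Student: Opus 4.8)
The plan is to derive \eqref{aqw} directly from formula \eqref{first} of Lemma \ref{aux} by exploiting the oddness of $F$, and then to specialize to $F=F_0$. First I would split the inner integral $\int_0^{\pi}(\cdots)\,\ud\vartheta$ appearing in \eqref{first} at $\vartheta=\pi/2$ and, on the interval $[\pi/2,\pi]$, apply the reflection $\vartheta\mapsto\pi-\vartheta$. Because $\cos(\pi-\vartheta)=-\cos\vartheta$, $\sin(\pi-\vartheta)=\sin\vartheta$ and $F(-s)=-F(s)$, the integrand $\cos\vartheta\,(\sin\vartheta)^{m-2}F(|c|\rho\cos\vartheta)$ is left invariant, so the two halves agree and $\int_0^{\pi}(\cdots)\,\ud\vartheta=2\int_0^{\pi/2}(\cdots)\,\ud\vartheta$. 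Then the change of variable $x=\cos\vartheta$ (so $\sin\vartheta=\sqrt{1-x^2}$, $\ud\vartheta=-\ud x/\sqrt{1-x^2}$, and $[0,\pi/2]$ is mapped onto $[0,1]$) turns $\cos\vartheta\,(\sin\vartheta)^{m-2}\,\ud\vartheta$ into $x(1-x^2)^{(m-3)/2}\,\ud x$; interchanging the order of integration by Fubini's theorem then gives exactly \eqref{aqw}, the prefactor $2\pi$ of \eqref{first} being doubled to $4\pi$.

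For \eqref{ciao} I would insert $F=F_0$ in \eqref{aqw}. Since $|c|\rho x>0$ whenever $\rho>0$ and $x\in(0,1)$, one has $F_0(|c|\rho x)=1$, so the $x$-integral reduces to the Beta-type integral $\int_0^1 x(1-x^2)^{(m-3)/2}\,\ud x=\tfrac12\int_0^1(1-u)^{(m-3)/2}\,\ud u=\tfrac1{m-1}$ (substituting $u=x^2$), while the $\rho$-integral equals $\int_0^{+\infty}e^{-\rho^2/2}\rho^m\,\ud\rho=2^{(m-1)/2}\,\Gamma\!\big(\tfrac{m+1}{2}\big)$ (substituting $s=\rho^2/2$). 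Collecting these factors and using the identity $\Gamma\!\big(\tfrac{m+1}{2}\big)=\tfrac{m-1}{2}\,\Gamma\!\big(\tfrac{m-1}{2}\big)$ to cancel the Gamma functions together with the factor $m-1$, the constant telescopes: one has $\tfrac{2^{(m-1)/2}\Gamma((m+1)/2)}{(m-1)\,\Gamma((m-1)/2)}=2^{(m-3)/2}$ and then $\tfrac{4\pi(\sqrt\pi)^{m-3}\,2^{(m-3)/2}}{(2\pi)^{m/2}}=\tfrac{\sqrt2}{\sqrt\pi}$, which yields $I_{m,k}(F_0)=\tfrac{\sqrt2}{\sqrt\pi}\,\tfrac{c_k}{|c|}$.

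There is no genuine obstacle here: the statement is a bookkeeping computation, and the only step deserving care is the consistent tracking of the powers of $2$ and $\pi$ in the prefactor. As a consistency check, \eqref{ciao} can also be obtained probabilistically: regarding $\tfrac1{\sqrt{(2\pi)^m}}\int_{\R^m}(\cdots)e^{-|x|^2/2}\,\ud x$ as an expectation under $N(0,\Idm)$, write $Z=\langle c,x\rangle/|c|$ (a standard normal variable) and decompose $x_k=\tfrac{c_k}{|c|}Z+W$ with $W$ Gaussian and independent of $Z$; then $I_{m,k}(F_0)=\E[\mathrm{sgn}(Z)\,x_k]=\tfrac{c_k}{|c|}\,\E|Z|=\tfrac{c_k}{|c|}\sqrt{2/\pi}$, in agreement with the displayed value.
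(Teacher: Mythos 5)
Your proof is correct and follows essentially the same route as the paper: identity \eqref{aqw} is obtained from \eqref{first} via the substitution $x=\cos\vartheta$ combined with the oddness of $F$ (you reflect $\vartheta\mapsto\pi-\vartheta$ before substituting, the paper substitutes over all of $[0,\pi]$ and uses the evenness of the resulting integrand on $[-1,1]$ --- the same computation), and the evaluation for $F_0$ uses the same two elementary integrals and the identity $\Gamma\left(\tfrac{m+1}{2}\right)=\tfrac{m-1}{2}\Gamma\left(\tfrac{m-1}{2}\right)$. The probabilistic consistency check via $\E[\mathrm{sgn}(Z)\,x_k]=\tfrac{c_k}{|c|}\E|Z|$ is a nice addition not present in the paper.
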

\begin{proof}  By changing variable $x= \cos \theta$ and using that $F(x) = - F(-x)$, $x \not =0$, we have  
$$ 
\int_0^{\pi} \cos\vartheta (\sin\vartheta)^{m-2} F(|c| \rho\cos\vartheta)\ud\vartheta = 2 \int_{0}^1 x (1-x^2)^{\frac{m-3}{2}} F(|c| \rho x) dx\ .
$$ 
Whence,
$$
I_{m,k} (F) = \dfrac{2 \cdot 2\pi (\sqrt{\pi})^{m-3}}{(2\pi)^{m/2} \Gamma\left(\frac{m-1}{2}\right)} \frac{c_k}{| c|}\ \int_0^{+\infty}\!\!\! e^{-\frac 12\rho^2} \rho^m d\rho  
\int_0^1 x (1-x^2)^{\frac{m-3}{2}} F(|c| \rho x) dx.
$$
Let us assume that $F =F_0 =\ind_{(0, \infty)} -   \ind_{(- \infty, 0)} $. We find 
$$
I_{m,k} (F_0) = \dfrac{4\pi (\sqrt{\pi})^{m-3}}{(2\pi)^{m/2} \Gamma\left(\frac{m-1}{2}\right)} \frac{c_k}{| c|}\ \int_0^{+\infty}\!\!\! e^{-\frac 12\rho^2} \rho^m d\rho  
\int_0^1 x (1-x^2)^{\frac{m-3}{2}} dx.
$$
Using that $\int_0^1 x (1-x^2)^{\frac{m-3}{2}}   \ud x  = \frac{1}{m-1}$ and 
$$
\int_0^{+\infty}\!\!\! e^{-\frac 12\rho^2} \rho^m \ud\rho = \Gamma\left(\frac{m+1}{2}\right) 2^{\frac{m-1}{2}}
$$
we find 
\begin{align*} 
I_{m,k}(F_0) &=  \dfrac{4\pi (\sqrt{\pi})^{m-3}}{(2\pi)^{m/2} \Gamma\left(\frac{m-1}{2}\right)} \Gamma\left(\frac{m+1}{2}\right) 2^{\frac{m-1}{2}} \,  \frac{1}{m-1} \,  \frac{c_k}{| c|}\\
&= \dfrac{4\pi (\sqrt{\pi})^{m-3}}{(\pi)^{m/2}  } \left(\frac{m-1}{2}\right) 2^{-\frac{1}{2}} \,  \frac{1}{m-1} \,  \frac{c_k}{| c|} = \frac{\sqrt{2}}{\sqrt{\pi}} \frac{c_k}{| c|},
\end{align*}
since $x \Gamma (x) = \Gamma (x+1)$, $x>0$  and this finishes the proof.
\end{proof}

\subsection{Proof of assertion \eqref{main0}}

Recall that $c_k(t) := \left(\dfrac{1 - e^{-2\lambda_k t}}{2\lambda_k}\right)^{1/2}$ for $k \in \{1, \dots, m\}$ and $t \ge 0$. Set 
$c(t) = (c_1(t), \dots, c_m(t)) \in \R^m$. 
Fix $m \ge 2$ and $\delta >0$, and put $ S_m := S_m(\delta)$. Then, for $m \ge 2$, define
\begin{equation} \label{aaa}
 A_m := \frac{{2}}{{\pi}}  \sum_{k=1}^m  \left(\int_0^{\delta} \dfrac{\lambda_k e^{-\lambda_k t}}{(1 - e^{-2\lambda_k t})^{1/2}}\, \frac{c_k(t)}{|c(t)|} \, dt\right)^2.
\end{equation} 
We prove that $\lim_{m \to \infty} S_m = \infty$ in two steps. \\

\noindent {\it I step.} We prove 
\begin{equation}\label{sfg}
 S_m \ge A_m,\;\;\; \forall\ m \ge 2.
\end{equation}
We start by constructing an approximating sequence of smooth functions for $F_0 := \ind_{(0, \infty)} -   \ind_{(- \infty, 0)}$.
For any $n \ge 1$, consider a non-decreasing $F_n  \in C^{ 2}_b (\R_+) $ such that $F_n (y) =0$ if $0 \le y \le 1/(n+1)$ and $F_n (y) = 1$ if $y \ge 1/n$.
Then, extend each $F_n$ to an odd function on $\R$ by the rule $F_n(x) = - F_n(-x)$ if $x<0$, and define
\begin{gather*}
 f_n (x_1, \ldots, x_m) = F_n  (x_1 + \ldots + x_m), \;\;\; x_1, \ldots, x_m \in \R.
\end{gather*}
It is clear that each $f_n \in C^{ 2}_b(\R^m)$ and $\| f_n\|_{\infty} \le 1$. Whence, 
\begin{align*}
S_m &\ge  \sup_{n \ge 1}\sum_{k=1}^m \Big(\int_0^{\delta} \dfrac{\lambda_k e^{-\lambda_k t}}{(1 - e^{-2\lambda_k t})^{1/2}}  \frac{1}{\sqrt{(2 \pi)^m} } \int_{\R^m}  
 f_n (c_1 (t) x_1, \ldots,  c_m(t)x_m)\,  x_k  \, e^{- \frac{|x|^2}{2}} dx dt \Big)^2 \\
&= \sup_{n \ge 1}  \sum_{k=1}^m \Big(\int_0^{\delta} \dfrac{\lambda_k e^{-\lambda_k t}}{(1 - e^{-2\lambda_k t})^{1/2}}  \frac{1}{\sqrt{(2 \pi)^m} } \int_{\R^m}  F_n(\langle c(t) , x \rangle ) x_k \, e^{- \frac{|x|^2}{2}} dx 
dt \Big)^2.
\end{align*}
Moreover, combining the fact that each $F_n$ is an odd functions with \eqref{aqw}, with $c$ replaced by $c(t)$, yields
 \begin{align*}
&\sup_{n \ge 1} \sum_{k=1}^m \Big(\int_0^{\delta} \dfrac{\lambda_k e^{-\lambda_k t}}{(1 - e^{-2\lambda_k t})^{1/2}}  \frac{1}{\sqrt{(2 \pi)^m} } \int_{\R^m}  F_n(\langle c(t) , x \rangle) x_k \, e^{- \frac{|x|^2}{2}} dx 
dt \Big)^2 \\ 
&=  \sup_{n \ge 1} \, 
\dfrac{  4\pi (\sqrt{\pi})^{m-3}}{(2\pi)^{m/2} \Gamma\left(\frac{m-1}{2}\right)}
 \sum_{k=1}^m \Big( \frac{c_k(t)}{| c(t)|}\ \int_0^{+\infty}\!\!\! e^{-\frac 12\rho^2} \rho^m d\rho  
\int_0^1 x (1-x^2)^{\frac{m-3}{2}} F_n(|c(t)| \rho x) dx \Big)^2.
\end{align*}
Then, using that both $F_n(x) \le F_{n+1}(x)$ and $F_n(x) \to F_0(x)$ hold for any $x \ge 0$, apply the monotone convergence theorem to get 
 \begin{align*}
S_m &\ge \sup_{n \ge 1} 
\dfrac{  4\pi (\sqrt{\pi})^{m-3}}{(2\pi)^{m/2} \Gamma\left(\frac{m-1}{2}\right)}   \sum_{k=1}^m \Big( \frac{c_k(t)}{| c(t)|}\ \int_0^{+\infty}\!\!\! e^{-\frac 12\rho^2} \rho^m d\rho  
\int_0^1 x (1-x^2)^{\frac{m-3}{2}} F_n(|c(t)| \rho x) \ud x \Big)^2\\ 
&=\dfrac{  4\pi (\sqrt{\pi})^{m-3}}{(2\pi)^{m/2} \Gamma\left(\frac{m-1}{2}\right)}  \sum_{k=1}^m \Big( \frac{c_k(t)}{| c(t)|}\ \int_0^{+\infty}\!\!\! e^{-\frac 12\rho^2} \rho^m d\rho  
\int_0^1 x (1-x^2)^{\frac{m-3}{2}} F_0(|c(t)| \rho x) \ud x \Big)^2\\
&=  \sum_{k=1}^m \Big(\int_0^{\delta} \dfrac{\lambda_k e^{-\lambda_k t}}{(1 - e^{-2\lambda_k t})^{1/2}}  \frac{1}{\sqrt{(2 \pi)^m} } \int_{\R^m}  F_0(\langle 
c(t),   x \rangle ) x_k \, e^{- \frac{|x|^2}{2}} dx  
dt \Big)^2 = A_m,  
\end{align*}
for $m \ge 2$. In the last  line  we have used both  \eqref{aqw} and  \eqref{ciao} with $c$ replaced by $c(t)$. This proves \eqref{sfg}.  \\

\noindent  {\it II step.} We prove that
\begin{equation}\label{qee}   
 \lim_{m \to \infty} A_m = \infty\ ,
\end{equation}
thus completing the proof of \eqref{main0}. Recalling the definition of $c_k(t)$, we have  
\begin{equation} \label{sqq}
 A_m = \frac{{2}}{{\pi}}  \sum_{k=1}^m \Big(  \int_0^{\delta} \dfrac{\sqrt{\lambda_k}  e^{-\lambda_k t}}{\sqrt{2}}\, \frac{1}{|c(t)|}\, dt\Big)^2  \ge 
 \frac{{1}}{{\pi}}\sum_{k=1}^m    \lambda_k \left(  \int_0^{\delta} \dfrac{ e^{-\lambda_k t}}{|c(t)|}\, dt \right)^2,\;\; m \ge 2. 
\end{equation}   
To bound \eqref{sqq} from below, note that 
 $$
|c(t)| = \left(\sum_{k=1}^{m} \dfrac{1 - e^{-2\lambda_k t}}{2\lambda_k}\right)^{1/2} \leq 
 \left(\sum_{k=1}^{+\infty} \dfrac{1 - e^{-2\lambda_k t}}{2\lambda_k}\right)^{1/2} = \left(\int_0^t \left[\sum_{k=1}^{+\infty} e^{-2\lambda_k s}\right] \ud s\right)^{1/2}
$$
holds for any $t \geq 0$. Now, if there is a positive constant $c_0$ such that $\lambda_k \geq c_0 k^2$ for any $k \ge 1$, then
$$
\sum_{k=1}^{+\infty} e^{-2\lambda_k s} \leq \sum_{k=1}^{+\infty} e^{-2{c_0} k^2 s} \leq \int_0^{+\infty} e^{-2{c_0} z^2 s} \ud z = \sqrt{\frac{\pi}{2{c_0}s}},\;\; s>0,
$$ 
yielding    
$$
 |c(t)|\leq \left(\int_0^t \sqrt{\frac{\pi}{2{c_0}s}} \ud s\right)^{1/2} = \left(\frac{2\pi t}{{c_0}}\right)^{1/4}.  
$$
Up to now we have found that
\begin{gather*}
A_m \ge \frac{{1}}{{\pi}}
  \sum_{k=1}^m    \lambda_k \left(  \int_0^{\delta} { e^{-\lambda_k t}}  \left(\frac{{c_0}}{2\pi t}\right)^{1/4} \, dt \right)^2,\;\; m \ge 2.  
\end{gather*}
Now, exploit that 
$$
\int_0^{\delta} t^{-\frac 14} e^{-\lambda t}dt = \left(\frac{1}{\lambda}\right)^{\frac 34} \int_0^{\lambda \delta} s^{-\frac 14} e^{- s} ds 
\geq \left(\frac{1}{\lambda}\right)^{\frac 34} \int_0^{{c_0}\delta} s^{-\frac 14} e^{-s}\ud s
$$  
holds for every $\lambda \geq {c_0}$, to get (after recalling that, in particular, $\lambda_k \ge c_0$, for any $k \ge 1$) 
\begin{align*}
A_m &\geq \frac{1}{\pi} \sqrt{\frac{{c_0}}{2\pi}}\, \sum_{k=1}^m \lambda_k \left(\int_0^{\delta} t^{-\frac 14} e^{-\lambda_k t}\ud t\right)^2  
\geq \frac{1}{\pi} \sqrt{\frac{{c_0}}{2\pi}}\sum_{k=1}^m \lambda_k  \left(\frac{1}{\lambda_k}\right)^{\frac 32} \left(\int_0^{{c_0}\delta} s^{-\frac 14} e^{-s}\ud s\right)^2 \\
&= \frac{1}{\pi} \sqrt{\frac{{c_0}}{2\pi}} \left(  \int_0^{{c_0}\delta} s^{-\frac 14} e^{-s}\ud s\right)^2 \sum_{k=1}^m \frac{1}{\sqrt{\lambda_k}}\ .
\end{align*}
Thus, if $\lambda_k \sim k^2$ as $k \to +\infty$, then $\sum_{k=1}^m \frac{1}{\sqrt{\lambda_k}} \sim \log m$ as $m \to +\infty$, and \eqref{qee} holds. This finishes the proof.


\begin{thebibliography}{999}  

\bibitem{bass}
 S. R. Athreya, R. F. Bass, E. A. Perkins, H\"older norm estimates for elliptic operators on finite and infinite-dimensional spaces. Trans. Amer. Math. Soc. 357 (2005)  5001-5029.
 
 
\bibitem{casar} V.  Casarino, P.  Ciatti, P. Sjogren, 
On the maximal operator of a general Ornstein-Uhlenbeck semigroup. 
Math. Z. 301 (2022)  2393-2413.  
 

\bibitem{goldysChojnowskaJFA} A. Chojnowska-Michalik, B.  Goldys,  
Generalized Ornstein-Uhlenbeck semigroups: Littlewood-Paley-Stein inequalities and the P.A. Meyer equivalence of norms, \!\!   J. Funct. Anal. 182 (2001) 243-279.
 

\bibitem{ChGold02} A. Chojnowska-Michalik,  B. Goldys,  Symmetric Ornstein-Uhlenbeck semigroups and their generators. Probab. Theory Related Fields 124 (2002)  459-486.
  




\bibitem{DL}  G.  Da Prato, A.  Lunardi, 
 {On the Ornstein-Uhlenbeck operator
 in spaces of continuous functions}, {J. Funct.
 Anal.} {  131} (1995) 94-114. 


\bibitem{dapratoLincei} G. Da Prato, Characterization of the domain of an elliptic operator of infinitely many variables in $L^2(\mu)$-spaces, Atti Accad. Naz. Lincei Cl. Sci. Fis. Mat. Natur. Rend. Lincei (9) Mat. Appl. 8 (1997) 101-105. 
  
 
 \bibitem{DZ1} G. Da Prato, J.
 Zabczyk, Second Order Partial Differential
Equations in Hilbert Spaces, London Math. Soc. Lecture Notes
vol. 293, Cambridge University Press,  2002.
 
 \bibitem{DZ} G. Da Prato, J. Zabczyk, Stochastic equations in 
 infinite
dimensions, Encyclopedia of Mathematics and its Applications, \! II edition, 152. Cambridge University Press,  2014.  

 \bibitem{D2} G. Da Prato, A new regularity result for Ornstein-Uhlenbeck generators and applications, J. Evol. Eq. 3 (2003) 485-498. 
 
 
\bibitem{DFPR}   G. Da Prato, F. Flandoli, E. Priola, M. R\"ockner,
     Strong uniqueness for stochastic evolution equations
in Hilbert spaces  perturbed by a bounded measurable drift, {Ann. of Prob.}    41
(2013)  3306-3344.   
 
\bibitem{fleming} W. Fleming,  Functions of several variables. Second edition. Undergraduate Texts in Mathematics. Springer-Verlag, New York-Heidelberg, 1977.  
 
 
\bibitem{gutierrez} C. E. Gutierrez,   C. Segovia, and J. L. Torrea, On higher Riesz transforms for Gaussian measures, J. Fourier Anal. Appl. 2 (1996) 583-596.  


\bibitem{hairer} M. Hairer, An Introduction to Stochastic PDEs,  2009,  available at \url{arXiv:0907.4178v1.}   


\bibitem{Nu}  D. Nualart,   ``The Malliavin calculus and related topics.
 Probability and its Applications'',   Springer-Verlag, New York, 1995.

 
\bibitem{lunardi}  A. Lunardi,  On the Ornstein-Uhlenbeck operator in $L^2$ spaces with respect to invariant measures. Trans. Amer. Math. Soc. 349 (1997) 155-169. 
 
 
\bibitem{maasNeerven09} J. Maas, J. van Neerven, Boundedness of Riesz transforms for elliptic operators on abstract Wiener spaces. J. Funct. Anal. 257 (2009)  2410-2475.


\bibitem{metafAltri} G. Metafune, J.  Pruss, A.  Rhandi, R.  Schnaubelt, The domain of the Ornstein-Uhlenbeck operator on an $L_p$-space with invariant measure. Ann. Sc. Norm. Super. Pisa Cl. Sci. (5) 1 (2002), no. 2, 471-485.


\bibitem{meyer}  P. A. Meyer, Note sur les processus d'Ornstein-Uhlenbeck, in ``Seminaire de Probabilites XVI,'' Lecture Notes in Math., Vol. 920, pp. 95-133, Springer-Verlag, Berlin/New York, 1982.
 
\bibitem{meyer84} 
 P. A. Meyer. Transformations de Riesz pour les Lois Gaussiennes. Seminaire de probabilites de Strasbourg 18:179-193, 1984.
 

\bibitem{priolaAOP}  E. Priola, An optimal regularity result for Kolmogorov equations and weak uniqueness for some
critical SPDEs, Ann. of Prob. 49 (2021) 1310-1346. 

 
\bibitem {priolarxiv} E. Priola, Correction to {\it ``An optimal regularity result for
Kolmogorov equations and weak uniqueness for
some critical SPDEs''}  arxiv...
 


\bibitem{Shigekawa92} I. Shigekawa, Sobolev spaces over the Wiener space based on an Ornstein-Uhlenbeck operator, J. Math. Kyoto Univ. 32 (1992) 731-748.

\bibitem{ShigekawaYosida} I. Shigekawa and N. Yoshida, Littlewood-Paley-Stein inequality for a symmetric diffusion, J. Math. Soc. Japan 44 (1992) 249-280.




\bibitem{spiegel} M. R. Spiegel, Mathematical Handbook of Formulas and Tables, Schaum, 1968. 
 
\bibitem{stein} E. Stein, ``Topics in Harmonic Analysis,'' Princeton Univ. Press, Princeton, NJ, 1970.



  
 
  
\end{thebibliography}
\end{document}